\theoremstyle{plain}
\newtheorem{theorem}{Theorem}[section]
\newtheorem{lemma}[theorem]{Lemma}
\theoremstyle{definition}
\newtheorem{remark}[theorem]{Remark}
\newtheorem{example}[theorem]{Example}
\newtheorem{cor}[theorem]{Corollary}
\theoremstyle{remark}
\begin{document}

\title [Numerical radius inequalities of operator matrices ]{Numerical radius inequalities of operator matrices with applications} 

\author{Pintu Bhunia, Santanu Bag and Kallol Paul}

\address{(Bhunia)Department of Mathematics, Jadavpur University, Kolkata 700032, India}
\email{pintubhunia5206@gmail.com}

\address{(Bag)Department of Mathematics, Vivekananda College For Women, Barisha, Kolkata 700008, India}
\email{santanumath84@gmail.com}

\address{(Paul)Department of Mathematics, Jadavpur University, Kolkata 700032, India}
\email{kalloldada@gmail.com}

\thanks{The first author would like to thank UGC, Govt. of India for the financial support in the form of junior research fellowship.}


\subjclass[2010]{Primary 47A12, 15A60, 26C10.}
\keywords{ Numerical radius; operator matrix; zeros of polynomial. }

\date{}
\maketitle
\begin{abstract}
 We present  upper and lower bounds for the numerical radius of $2 \times 2$ operator matrices which improve on the existing bounds for the same. As an application of the results obtained we give a better estimation for the zeros of a polynomial.

\end{abstract}

\section{Introduction}
\noindent Let $B(H)$ denote the $C^*$-algebra of all bounded linear operators on a complex Hilbert space $H$ with usual inner product $\langle .,. \rangle$.  Let $H_1, H_2$ be two Hilbert spaces and $B(H_1, H_2)$ be the set of all bounded linear operators from $H_1$ into $H_2$. If $H_1=H_2=H$ then we write $B(H_1, H_2)=B(H)$. For $T\in B(H)$, the operator norm of $T$, denoted as $\|T\|$, is defined as $\|T\|=\sup\{\|Tx\|:x \in H, ~\|x\|=1\}$. The numerical range of $T$, denoted as $W(T),$ is defined as $ W(T)=\{ \langle Tx,x \rangle: x \in H,~ \|x\|=1\}$. The spectrum of $T$, denoted as $\sigma(T)$ is defined as the collection of all spectral values of $T.$ The numerical radius and spectral radius of $T$, denoted as $w(T)$ and $\rho(T)$ respectively,  are defined as the radius of the smallest circle with centre at origin which contains the numerical range and spectrum of $T$.  It is well known that $\sigma(T) \subseteq \overline{W(T)}$ and so  $ \rho(T) \leq w(T). $ The Crawford number of $T$, denoted as $m(T),$ is defined as $m(T) = \inf \{ |\langle Tx,x \rangle |: x \in H, \|x\|=1\}.$  It is easy to see that $w(T)$ is a norm on $B(H)$,  equivalent to the operator norm that satisfies the inequality  
\[ \frac{1}{2}\|T\|\leq w(T)\leq \|T\|. \]
The first inequality becomes an equality if $T^2=0$ and the second inequality becomes an equality if $T$ is normal. Various numerical radius inequalities improving this inequality have been studied in \cite{OFK, FK2, PB1, KS, TY}. Kittaneh in \cite{FK3, FK2} respectively, proved that if $T\in B(H)$ then 
\[ \frac{1}{4}\|T^{*}T+TT^{*}\|\leq w^2(T)\leq \frac{1}{2}\|T^{*}T+TT^{*}\|\]  and
\[ w(T)\leq \frac{1}{2}(\|T\|+\|T^2\|^{\frac{1}{2}}).\]
Also Abu-Omar and Kittaneh in \cite{AOFK} proved that if $T\in B(H)$ then 
\begin{eqnarray*}
 \frac{1}{2}\sqrt{\|T^{*}T+TT^{*}\|+2m(T^2)} &\leq& w(T) \\
&\leq &  \frac{1}{2}\sqrt{\|T^{*}T+TT^{*}\|+2w(T^2)}.
\end{eqnarray*}
They also proved that the upper bound of $w(T)$ in this inequality is better than the above upper bounds in \cite{FK3, FK2}. 

Here we present an upper bound for the numerical radius of  $2\times 2$  operator matrices which improves the existing bound in \cite{AF}. Also we present  lower bounds for the numerical radius of  $2\times 2$ operator matrices. As an application of the results obtained we estimate  bounds for the zeros of a complex polynomial. Also, we show with numerical examples that the bounds obtained by us for the zeros of a polynomial  improves on the existing bounds.

\section{Main results}
We begin this section with the following lemmas which are used to reach our goal in this present article. These four lemmas can be found in \cite{AF, A, FH}.
\begin{lemma}[\cite{AF}] \label{theorem:lemma1}
	Let $T \in B(H)$, then $w(T)=\sup_{\theta \in \mathbb{R}}\| \textit {Re}(e^{i \theta}T) \|$.
\end {lemma}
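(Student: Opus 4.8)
The plan is to reduce everything to the elementary fact that a bounded self-adjoint operator $S$ satisfies $\|S\| = \sup_{\|x\|=1}\abs{\langle Sx,x\rangle}$, i.e.\ that its norm coincides with its numerical radius. For each real $\theta$ the operator $\mathrm{Re}(e^{i\theta}T) = \tfrac12(e^{i\theta}T + e^{-i\theta}T^*)$ is self-adjoint, so I may compute its norm as a supremum of diagonal inner products and only afterwards take the supremum over $\theta$. This turns the statement into a two-parameter supremum that I can rearrange at will.

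First I would record, for a unit vector $x$, the identity
\[ \langle \mathrm{Re}(e^{i\theta}T)x,x\rangle = \mathrm{Re}\big(e^{i\theta}\langle Tx,x\rangle\big), \]
which follows at once from $\langle T^*x,x\rangle = \overline{\langle Tx,x\rangle}$. Applying the self-adjoint norm identity then gives $\|\mathrm{Re}(e^{i\theta}T)\| = \sup_{\|x\|=1}\abs{\mathrm{Re}(e^{i\theta}\langle Tx,x\rangle)}$. Taking the supremum over $\theta$ and interchanging the two suprema (legitimate, since the order of a double supremum is immaterial) yields
\[ \sup_{\theta\in\mathbb{R}}\|\mathrm{Re}(e^{i\theta}T)\| = \sup_{\|x\|=1}\ \sup_{\theta\in\mathbb{R}}\abs{\mathrm{Re}(e^{i\theta}\langle Tx,x\rangle)}. \]

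The key remaining step is the pointwise optimization over $\theta$: for a fixed complex number $z = \langle Tx,x\rangle$, writing $z = \abs{z}e^{i\phi}$ gives $\mathrm{Re}(e^{i\theta}z) = \abs{z}\cos(\theta+\phi)$, whose modulus has supremum $\abs{z}$ over $\theta$, attained at $\theta = -\phi$. Substituting $\sup_{\theta}\abs{\mathrm{Re}(e^{i\theta}\langle Tx,x\rangle)} = \abs{\langle Tx,x\rangle}$ collapses the right-hand side to $\sup_{\|x\|=1}\abs{\langle Tx,x\rangle} = w(T)$, which is precisely the claim.

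I do not anticipate a genuine obstacle. The only ingredient that is not purely formal is the self-adjoint norm identity $\|S\|=\sup_{\|x\|=1}\abs{\langle Sx,x\rangle}$, which I would invoke as a standard fact (provable via the spectral theorem, or by a direct Cauchy--Schwarz argument for self-adjoint operators). The interchange of suprema is automatic, and the optimization over $\theta$ is the one-line trigonometric computation above, so the proof is essentially a matter of assembling these three observations in order.
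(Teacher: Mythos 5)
Your proof is correct and complete: the three ingredients (self-adjointness of $\mathrm{Re}(e^{i\theta}T)$ together with the identity $\|S\|=\sup_{\|x\|=1}\abs{\langle Sx,x\rangle}$ for self-adjoint $S$, the interchange of the two suprema, and the phase optimization $\sup_{\theta}\abs{\mathrm{Re}(e^{i\theta}z)}=\abs{z}$) assemble exactly as you describe. The paper itself gives no proof — it quotes the lemma from the reference [AF] — and your argument is precisely the standard one used there, so there is nothing to reconcile.
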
		
		
\begin {lemma}[\cite{AF}] \label{theorem:lemma2} 
Let $X \in B(H_1), Y \in B(H_2, H_1), Z \in B(H_1, H_2)$ and $W \in B(H_2)$. Then the following results hold:\\
$(i)$ $w\left(\begin{array}{cc}
    X&0 \\
    0&W
	\end{array}\right)=\max\{w(X), w(W)\}$.\\
$(ii)$ $w\left(\begin{array}{cc}
    0&Y \\
    Z&0
	\end{array}\right)=w\left(\begin{array}{cc}
    0&Z \\
    Y&0
	\end{array}\right)$.\\
$(iii)$ $w\left(\begin{array}{cc}
    0&Y \\
    Z&0
	\end{array}\right)= \sup_{\theta \in \mathbb{R}} \frac{1}{2}\|e^{i\theta}Y+e^{-i\theta}Z^{*}\|$.\\
$(iv)$ If  ~~$H_1=H_2$, then~~ $w\left(\begin{array}{cc}
    0&Y \\
    Y&0
	\end{array}\right)=w(Y)$.
\end {lemma}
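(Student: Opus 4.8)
The plan is to establish the four parts in turn, leaning on unitary invariance of the numerical radius together with Lemma \ref{theorem:lemma1}, and I expect the only mildly delicate point to be the norm identity for a self-adjoint off-diagonal block operator, which is what makes parts $(iii)$ and $(iv)$ work. For $(i)$ I would argue straight from the definition of the numerical range on $H_1 \oplus H_2$: writing a unit vector as $(x,y)$ with $\|x\|^2 + \|y\|^2 = 1$, the quadratic form of the block-diagonal operator splits as $\langle Xx,x\rangle + \langle Wy,y\rangle$. Using $|\langle Xx,x\rangle| \le w(X)\|x\|^2$ and the analogous bound for $W$, together with $w(X)\|x\|^2 + w(W)\|y\|^2 \le \max\{w(X),w(W)\}$, gives the upper estimate; testing on vectors supported in a single summand, i.e.\ $(x,0)$ and $(0,y)$, gives the reverse inequality. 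For $(ii)$ I would produce an explicit intertwining unitary: since $\left(\begin{smallmatrix}0 & Y\\ Z & 0\end{smallmatrix}\right)$ acts on $H_1\oplus H_2$ while $\left(\begin{smallmatrix}0 & Z\\ Y & 0\end{smallmatrix}\right)$ acts on $H_2\oplus H_1$, the swap $U(x,y)=(y,x)$ is the natural candidate, and a short computation shows $U\left(\begin{smallmatrix}0 & Y\\ Z & 0\end{smallmatrix}\right)U^{*}=\left(\begin{smallmatrix}0 & Z\\ Y & 0\end{smallmatrix}\right)$; invariance of $w$ under unitary similarity then closes the case.

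For $(iii)$ the engine is Lemma \ref{theorem:lemma1}. Applying it to $T=\left(\begin{smallmatrix}0 & Y\\ Z & 0\end{smallmatrix}\right)$ and computing the real part, one finds $\mathrm{Re}(e^{i\theta}T)=\tfrac12\left(\begin{smallmatrix}0 & A\\ A^{*} & 0\end{smallmatrix}\right)$ with $A=e^{i\theta}Y+e^{-i\theta}Z^{*}$, after using $T^{*}=\left(\begin{smallmatrix}0 & Z^{*}\\ Y^{*} & 0\end{smallmatrix}\right)$. The decisive step, which I regard as the main (if modest) obstacle, is the identity $\bigl\|\left(\begin{smallmatrix}0 & A\\ A^{*} & 0\end{smallmatrix}\right)\bigr\|=\|A\|$ for a self-adjoint off-diagonal block; I would prove it by squaring the operator to the block-diagonal $\left(\begin{smallmatrix}AA^{*} & 0\\ 0 & A^{*}A\end{smallmatrix}\right)$ and invoking $\|AA^{*}\|=\|A^{*}A\|=\|A\|^{2}$. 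This yields $\|\mathrm{Re}(e^{i\theta}T)\|=\tfrac12\|e^{i\theta}Y+e^{-i\theta}Z^{*}\|$, and taking the supremum over $\theta$ gives exactly the asserted formula.

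Finally, $(iv)$ should fall out as a corollary of $(iii)$. Setting $Z=Y$ in the formula of $(iii)$ and recognizing that $\tfrac12\|e^{i\theta}Y+e^{-i\theta}Y^{*}\|=\|\mathrm{Re}(e^{i\theta}Y)\|$, the supremum over $\theta\in\mathbb{R}$ equals $w(Y)$ by a second application of Lemma \ref{theorem:lemma1}. Throughout, the only genuine content beyond bookkeeping is the block-norm identity in $(iii)$ and the correct tracking of which Hilbert space each block acts on in $(ii)$; once those are pinned down, every part reduces to the $\mathrm{Re}$-supremum characterization of $w$.
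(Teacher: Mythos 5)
Your proof is correct in all four parts. Note, however, that the paper itself gives no proof of this lemma: it is quoted verbatim from the cited reference of Abu-Omar and Kittaneh, with the remark that ``these four lemmas can be found in'' the references. So there is no in-paper argument to compare against; what you have done is supply the proof that the paper delegates to the literature, and your route is essentially the standard one used there: part $(i)$ by splitting the quadratic form of a unit vector $(x,y)$ and testing on vectors supported in one summand; part $(ii)$ by conjugating with the swap unitary between $H_1\oplus H_2$ and $H_2\oplus H_1$; part $(iii)$ by applying the $\mathrm{Re}$-supremum formula of Lemma \ref{theorem:lemma1} to the off-diagonal operator together with the identity $\bigl\|\bigl(\begin{smallmatrix}0 & A\\ A^{*} & 0\end{smallmatrix}\bigr)\bigr\|=\|A\|$, which your squaring argument (using the $C^{*}$-identity $\|B^{2}\|=\|B\|^{2}$ for self-adjoint $B$) establishes correctly; and part $(iv)$ by specializing $(iii)$ to $Z=Y$ and reapplying Lemma \ref{theorem:lemma1}. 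All the computations check out, including the adjoint formula $T^{*}=\bigl(\begin{smallmatrix}0 & Z^{*}\\ Y^{*} & 0\end{smallmatrix}\bigr)$ and the careful tracking of which direct sum each operator acts on.
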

\begin {lemma} [\cite{FH}]\label{theorem:lemma3}
Let $C,T\in B(H)$. Then $w(TC+C^{*}T)\leq 2w(T)$, where $C$ is any contraction $(\textit{i.e.}, \|C\|\leq 1)$.
\end {lemma}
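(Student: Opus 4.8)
The plan is to argue directly from the defining quantity $\sup\{|\langle Ax,x\rangle| : \|x\|=1\}$ for $A = TC + C^*T$. I would fix a unit vector $x \in H$ and set $u = Cx$; since $C$ is a contraction, $\|u\| = \|Cx\| \le \|C\|\,\|x\| \le 1$. A short computation then rewrites the quadratic form of $TC+C^*T$ as a symmetric mixed form in $x$ and $u$:
\[ \langle (TC + C^*T)x, x\rangle = \langle TCx, x\rangle + \langle C^*Tx, x\rangle = \langle Tu, x\rangle + \langle Tx, u\rangle, \]
where I have used $\langle C^*Tx, x\rangle = \langle Tx, Cx\rangle = \langle Tx, u\rangle$. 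Thus the whole problem reduces to bounding the expression $\langle Tu, x\rangle + \langle Tx, u\rangle$ in terms of $w(T)$.

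The key step is to linearize this mixed form through the polarization identity. Expanding $\langle T(x \pm u), x \pm u\rangle$ and subtracting gives
\[ \langle Tu, x\rangle + \langle Tx, u\rangle = \tfrac{1}{2}\big[\langle T(x+u), x+u\rangle - \langle T(x-u), x-u\rangle\big]. \]
Each summand on the right is a value of the quadratic form of $T$, so $\abs{\langle T(x \pm u), x \pm u\rangle} \le w(T)\,\|x \pm u\|^2$. Applying the triangle inequality over $\mathbb{C}$ and then the parallelogram law $\|x+u\|^2 + \|x-u\|^2 = 2(\|x\|^2 + \|u\|^2)$ yields
\[ \abs{\langle Tu, x\rangle + \langle Tx, u\rangle} \le \tfrac{w(T)}{2}\big(\|x+u\|^2 + \|x-u\|^2\big) = w(T)\big(\|x\|^2 + \|u\|^2\big). \]
Since $\|x\| = 1$ and $\|u\| \le 1$, the right-hand side is at most $2w(T)$, so $\abs{\langle (TC+C^*T)x, x\rangle} \le 2w(T)$ for every unit vector $x$. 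Taking the supremum over all such $x$ gives exactly $w(TC + C^*T) \le 2w(T)$.

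I expect the only delicate points to be bookkeeping rather than anything structural: one must keep all quantities complex-valued throughout, since the polarization identity and the triangle inequality are being used over $\mathbb{C}$, and one must correctly identify $\langle (TC+C^*T)x, x\rangle$ with the symmetric form $\langle Tu,x\rangle + \langle Tx,u\rangle$ so that the parallelogram law can be invoked at the right moment. Notably, the contraction hypothesis $\|C\| \le 1$ is used only at the very last step, through $\|Cx\| \le 1$; this is precisely what collapses the bound $w(T)(\|x\|^2 + \|u\|^2)$ into the clean constant factor $2$. An alternative route passing through Lemma \ref{theorem:lemma2} and the structure of $2\times 2$ operator matrices seems feasible, but the polarization argument above is the shortest and most self-contained path to the estimate.
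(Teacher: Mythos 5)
Your proof is correct. Setting $u = Cx$, the identity $\langle (TC+C^{*}T)x,x\rangle = \langle Tu,x\rangle + \langle Tx,u\rangle$ is right, the expansion $\langle T(x+u),x+u\rangle - \langle T(x-u),x-u\rangle = 2\big(\langle Tu,x\rangle + \langle Tx,u\rangle\big)$ checks out by direct computation, and the chain $\abs{\langle Tv,v\rangle} \le w(T)\|v\|^{2}$ (applied to $v = x\pm u$), the triangle inequality, and the parallelogram law together give $\abs{\langle (TC+C^{*}T)x,x\rangle} \le w(T)\big(\|x\|^{2}+\|Cx\|^{2}\big) \le 2w(T)$, from which the conclusion follows by taking the supremum over unit vectors.

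One point of comparison: the paper does not prove this lemma at all; it is imported as a known result from Fong and Holbrook \cite{FH} and used as a black box (its only role in the paper is in the Remark after Theorem \ref{theorem:1} and in the proof of Theorem \ref{theorem:4}). So there is no in-paper argument to measure yours against; what you have written is essentially the classical polarization proof of the Fong--Holbrook estimate, and it is entirely self-contained, using nothing beyond the definition of the numerical radius and Hilbert-space identities. A small bonus of your route, worth recording: since the final bound is $w(T)\big(\|x\|^{2}+\|Cx\|^{2}\big)$, the same computation proves the slightly more general inequality $w(TC+C^{*}T) \le \big(1+\|C\|^{2}\big)\,w(T)$ for arbitrary $C \in B(H)$, which reduces to the stated lemma exactly when $\|C\| \le 1$.
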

\begin {lemma} [\cite{A}]\label{lemma:lemma4}

If $D=\left(\begin{array}{cccccc}
    0&0&.&.&.&0\\
		1&0&.&.&.&0\\
    .& & & & & \\
    .& & & & &\\
    .& & & & &\\
    0&0&.&.&1&0
   \end{array}\right)_{n,n}$ then  $w(D)=\cos\frac{\pi}{n+1}$.
\end{lemma}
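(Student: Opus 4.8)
The plan is to reduce the computation to the norm of a single fixed Hermitian tridiagonal matrix, using Lemma \ref{theorem:lemma1}. First I would write $w(D) = \sup_{\theta \in \mathbb{R}} \|\textit{Re}(e^{i\theta} D)\|$ and note that, since $D$ carries $1$'s on the first subdiagonal and zeros elsewhere, the Hermitian matrix $2\,\textit{Re}(e^{i\theta}D) = e^{i\theta}D + e^{-i\theta}D^{*}$ is tridiagonal with zero diagonal, entries $e^{i\theta}$ on the subdiagonal and $e^{-i\theta}$ on the superdiagonal. The whole problem is then to understand how the norm of this matrix varies with $\theta$.

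The key observation is that the dependence on $\theta$ is only a phase, which can be gauged away. Conjugating by the diagonal unitary $U_\theta = \mathrm{diag}(1, e^{i\theta}, e^{2i\theta}, \ldots, e^{(n-1)i\theta})$ turns every subdiagonal entry $e^{i\theta}$ and every superdiagonal entry $e^{-i\theta}$ into $1$, so that $U_\theta^{*}\bigl(2\,\textit{Re}(e^{i\theta}D)\bigr)U_\theta = T_n$, where $T_n$ is the $n \times n$ tridiagonal matrix with zero diagonal and all off-diagonal entries equal to $1$. Since conjugation by a unitary preserves the operator norm, $\|\textit{Re}(e^{i\theta}D)\| = \tfrac{1}{2}\|T_n\|$ for every $\theta$, and the supremum in Lemma \ref{theorem:lemma1} collapses to $w(D) = \tfrac{1}{2}\|T_n\|$.

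Finally I would invoke the classical spectrum of $T_n$: its eigenvalues are $2\cos\frac{k\pi}{n+1}$ for $k = 1, \ldots, n$, with an eigenvector whose $j$-th coordinate is $\sin\frac{jk\pi}{n+1}$. This is checked directly from the three-term recurrence $x_{j-1} + x_{j+1} = \lambda x_j$ together with the boundary conditions $x_0 = x_{n+1} = 0$. Because $T_n$ is Hermitian, its norm equals its largest eigenvalue in modulus, namely $2\cos\frac{\pi}{n+1}$ (attained at $k=1$), and therefore $w(D) = \cos\frac{\pi}{n+1}$, as claimed.

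The main obstacle is the gauge step: recognizing that the $\theta$-dependence is a pure phase removable by a diagonal unitary is exactly what makes the supremum trivial to evaluate, and without this reduction one would be stuck estimating a whole one-parameter family of tridiagonal norms. The closing eigenvalue computation for $T_n$ is entirely standard, so I expect no real difficulty once the reduction is in place.
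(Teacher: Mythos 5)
Your proof is correct and complete. Note that the paper does not actually prove this lemma: it is imported verbatim from Abu-Omar and Kittaneh \cite{A} (the paper only says the lemma ``can be found in'' the cited references), so there is no internal proof to compare yours against; what your argument buys is a self-contained verification of the quoted fact. Both of your steps are sound and are the standard route. The gauge step checks entrywise: with $U_\theta=\mathrm{diag}(1,e^{i\theta},\ldots,e^{(n-1)i\theta})$ one has $U_\theta^{*}\bigl(e^{i\theta}D+e^{-i\theta}D^{*}\bigr)U_\theta=T_n$, so every member of the family $\textit{Re}(e^{i\theta}D)$ has the same norm $\tfrac{1}{2}\|T_n\|$ and the supremum in Lemma \ref{theorem:lemma1} collapses; equivalently, $e^{i\theta}D$ is unitarily equivalent to $D$ for every $\theta$, which is why $W(D)$ is a disc centred at the origin. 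The spectral step is equally solid: the vectors with $j$-th entry $\sin\frac{jk\pi}{n+1}$ satisfy $x_{j-1}+x_{j+1}=2\cos\frac{k\pi}{n+1}\,x_j$ with the boundary conditions $x_0=x_{n+1}=0$, producing all $n$ eigenvalues of the Hermitian matrix $T_n$, whence $\|T_n\|=\max_{1\le k\le n}\bigl|2\cos\frac{k\pi}{n+1}\bigr|=2\cos\frac{\pi}{n+1}$ and $w(D)=\cos\frac{\pi}{n+1}$.
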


Now we are ready to prove the following inequality for the numerical radius of  $2\times 2$ operator matrices which improves on the existing inequalities.

\begin{theorem} \label{theorem:1}
Let $X \in B(H_2, H_1), Y \in B(H_1, H_2)$.  Then 

\[w^4\left(\begin{array}{cc}
    0&X \\
    Y&0
	\end{array}\right)\leq \frac{1}{16}\|S\|^2+\frac{1}{4}w^2(YX)+\frac{1}{8}w(YXS+SYX),\]   where $S=|X|^2+|Y^{*}|^2$. 
\end{theorem}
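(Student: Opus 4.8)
The plan is to reduce everything to the norm of a single operator via Lemma~\ref{theorem:lemma2}(iii) and then apply the $C^*$-identity twice, using Lemma~\ref{theorem:lemma1} at the end to convert norms of real parts into numerical radii. Denote by $T=\left(\begin{array}{cc} 0 & X \\ Y & 0\end{array}\right)$ the operator matrix in question. First I would invoke Lemma~\ref{theorem:lemma2}(iii), with the roles of its $Y$ and $Z$ played by $X$ and $Y$, to get $w(T)=\sup_{\theta\in\mathbb{R}}\tfrac12\|e^{i\theta}X+e^{-i\theta}Y^*\|$. Since $\|A\|^2=\|A^*A\|$, setting $A_\theta=e^{i\theta}X+e^{-i\theta}Y^*$ and expanding $A_\theta^*A_\theta$ makes the $\theta$-independent terms combine into $X^*X+YY^*=S$ while the remaining terms carry the rotation $e^{\pm 2i\theta}$; this yields
\[ w^2(T)=\frac14\sup_{\theta}\bigl\|S+e^{2i\theta}YX+e^{-2i\theta}(YX)^*\bigr\|. \]

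The crux is to square once more. Put $\phi=2\theta$ (which again ranges over all of $\mathbb{R}$) and $B_\phi=S+e^{i\phi}YX+e^{-i\phi}(YX)^*$. Each $B_\phi$ is self-adjoint, so $\|B_\phi\|^2=\|B_\phi^2\|$ and therefore $w^4(T)=\tfrac{1}{16}\sup_\phi\|B_\phi^2\|$. Writing $R_\phi=e^{i\phi}YX+e^{-i\phi}(YX)^*$, I would expand
\[ B_\phi^2=S^2+(SR_\phi+R_\phi S)+R_\phi^2 \]
and bound the three blocks separately by the triangle inequality.

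Each block is then handled uniformly in $\phi$. Trivially $\|S^2\|=\|S\|^2$, which produces the $\tfrac{1}{16}\|S\|^2$ term. For the last block, $R_\phi=2\,\textit{Re}(e^{i\phi}YX)$ is self-adjoint, so $\|R_\phi^2\|=\|R_\phi\|^2\le 4\bigl(\sup_\phi\|\textit{Re}(e^{i\phi}YX)\|\bigr)^2=4w^2(YX)$ by Lemma~\ref{theorem:lemma1}. For the middle block, the decisive observation (using $S^*=S$) is
\[ SR_\phi+R_\phi S=e^{i\phi}(YXS+SYX)+e^{-i\phi}(YXS+SYX)^*=2\,\textit{Re}\bigl(e^{i\phi}(YXS+SYX)\bigr), \]
so that $\|SR_\phi+R_\phi S\|\le 2w(YXS+SYX)$, again by Lemma~\ref{theorem:lemma1}. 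Summing the three bounds, taking the supremum over $\phi$, and dividing by $16$ gives exactly the asserted inequality.

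The main obstacle is conceptual rather than computational: one must see that squaring twice is the right strategy, and that both $R_\phi^2$ and the symmetric cross term $SR_\phi+R_\phi S$ can be written as real parts of a single rotated operator. Once this is recognized, Lemma~\ref{theorem:lemma1} turns each supremum of norms of real parts straight into a numerical radius and nothing delicate remains. The only points I would check carefully are the harmless reindexing $\phi=2\theta$ and the cancellation $X^*X+YY^*=S$ on the rotation-free part of $A_\theta^*A_\theta$, both routine.
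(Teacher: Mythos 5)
Your proposal is correct and follows essentially the same route as the paper: reduce to $\sup_\theta\frac12\|e^{i\theta}X+e^{-i\theta}Y^*\|$ via Lemma~\ref{theorem:lemma2}(iii), apply the $C^*$-identity twice to the self-adjoint operator $S+2\,\textit{Re}(e^{2i\theta}YX)$, split the square into the three blocks $S^2$, $4(\textit{Re}(e^{2i\theta}YX))^2$ and $2\,\textit{Re}(e^{2i\theta}(YXS+SYX))$, and finish with the triangle inequality and Lemma~\ref{theorem:lemma1}. The only differences are cosmetic (your reindexing $\phi=2\theta$ and the notation $R_\phi$), so nothing further is needed.
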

\begin{proof}
Let $f(\theta)=\frac{1}{2}\|e^{i\theta}X+e^{-i\theta}Y^{*}\|$. Therefore, 
\begin{eqnarray*}
f(\theta)&=&\frac{1}{2}\|(e^{i\theta}X+e^{-i\theta}Y^{*})^{*}(e^{i\theta}X+e^{-i\theta}Y^{*})\|^{\frac{1}{2}}\\
         &=&\frac{1}{2}\|(e^{-i\theta}X^{*}+e^{i\theta}Y)(e^{i\theta}X+e^{-i\theta}Y^{*})\|^{\frac{1}{2}}\\
				 &=& \frac{1}{2}\|S+ 2\textit{Re}(e^{2i\theta}YX)\|^{\frac{1}{2}}\\
				 &=& \frac{1}{2}\|(S+ 2\textit{Re}(e^{2i\theta}YX))^2\|^{\frac{1}{4}}\\
				 &=& \frac{1}{2}\|S^2+ 4(\textit{Re}(e^{2i\theta}YX))^2+2\textit{Re}(e^{2i\theta}(YXS+SYX))\|^{\frac{1}{4}}\\
	\Rightarrow f^4(\theta)&\leq& \frac{1}{16}\|S\|^2+\frac{1}{4}\|\textit{Re}(e^{2i\theta}YX)\|^2+\frac{1}{8}	\|\textit{Re}(e^{2i\theta}(YXS+SYX))\|.		
\end{eqnarray*}
Now taking supremum over $\theta \in \mathbb{R}$ in the above inequality and then from Lemma \ref{theorem:lemma2} $(iii)$ and Lemma \ref {theorem:lemma1} we get,
\[w^4\left(\begin{array}{cc}
    0&X \\
    Y&0
	\end{array}\right)\leq \frac{1}{16}\|S\|^2+\frac{1}{4}w^2(YX)+\frac{1}{8}w(YXS+SYX).\] This completes the proof.
\end{proof}

 Now using Lemma \ref{theorem:lemma2} $(ii)$ and Theorem \ref{theorem:1} we get the following inequality.

\begin{cor}\label{theorem:cor1} 
Let $X \in B(H_2, H_1), Y \in B(H_1, H_2)$.  Then 
\[w^4\left(\begin{array}{cc}
    0&X \\
    Y&0
	\end{array}\right)\leq \frac{1}{16}\|P\|^2+\frac{1}{4}w^2(XY)+\frac{1}{8}w(XYP+PXY),\] where $P=|X^{*}|^2+|Y|^2$.
\end{cor}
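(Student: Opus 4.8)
The plan is to deduce Corollary \ref{theorem:cor1} directly from Theorem \ref{theorem:1} by applying the symmetry recorded in Lemma \ref{theorem:lemma2}$(ii)$. First I would observe that Theorem \ref{theorem:1} already gives a bound for $w\left(\begin{smallmatrix} 0 & X' \\ Y' & 0 \end{smallmatrix}\right)$ valid for \emph{any} admissible pair $(X',Y')$; the strategy is to feed a cleverly swapped pair into that theorem so that the roles of $X$ and $Y$ get interchanged, turning the quantity $YX$ into $XY$ and $S = \abs{X}^2 + \abs{Y^*}^2$ into $P = \abs{X^*}^2 + \abs{Y}^2$.

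The key computation is to track what the swap does to each ingredient of the bound. By Lemma \ref{theorem:lemma2}$(ii)$ we have the identity
\[
w\left(\begin{array}{cc} 0 & X \\ Y & 0 \end{array}\right) = w\left(\begin{array}{cc} 0 & Y \\ X & 0 \end{array}\right),
\]
so the left-hand side of the desired inequality is unchanged if we apply Theorem \ref{theorem:1} to the pair $(Y,X)$ in place of $(X,Y)$. Concretely, I would set $X' = Y$ and $Y' = X$ and invoke Theorem \ref{theorem:1} with these substitutions. The theorem then controls $w^4\left(\begin{smallmatrix} 0 & Y \\ X & 0 \end{smallmatrix}\right)$ by $\frac{1}{16}\|S'\|^2 + \frac{1}{4}w^2(Y'X') + \frac{1}{8}w(Y'X'S' + S'Y'X')$, where $S' = \abs{X'}^2 + \abs{Y'^*}^2 = \abs{Y}^2 + \abs{X^*}^2 = P$ and $Y'X' = XY$. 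Substituting these back and using the equality of the two numerical radii from Lemma \ref{theorem:lemma2}$(ii)$ yields exactly the claimed bound.

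There is essentially no obstacle here: the corollary is a purely formal consequence of the theorem combined with the swap symmetry, so the only thing to verify carefully is that the substitution $X' = Y$, $Y' = X$ respects the domain and codomain constraints (namely that $X' = Y \in B(H_1,H_2)$ and $Y' = X \in B(H_2,H_1)$ fit the hypotheses of Theorem \ref{theorem:1} with the Hilbert spaces $H_1$ and $H_2$ interchanged) and that the quantities $S'$ and $Y'X'$ simplify to $P$ and $XY$ as claimed. I would conclude by writing out these identifications and stating that the resulting inequality is precisely the assertion of Corollary \ref{theorem:cor1}.
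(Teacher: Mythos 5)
Your proposal is correct and is exactly the paper's argument: the paper derives Corollary \ref{theorem:cor1} by combining Lemma \ref{theorem:lemma2}$(ii)$ with Theorem \ref{theorem:1}, precisely the swap $(X',Y')=(Y,X)$ you carry out, under which $S'$ becomes $P$ and $Y'X'$ becomes $XY$. Your write-up simply makes explicit the bookkeeping (including the interchange of $H_1$ and $H_2$) that the paper leaves to the reader.
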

Again using  Lemma \ref{theorem:lemma2} $(i)$ and Theorem \ref{theorem:1} we get the following inequality.

\begin{cor}\label{theorem:cor5}
Let $X \in B(H_2, H_1), Y \in B(H_1, H_2)$.  Then 

\[ w(XY)\leq \frac{1}{4} \sqrt{\|S\|^2+4w^2(YX)+2w(YXS+SYX)}\] where $S=|X|^2+|Y^{*}|^2$.
 \end{cor}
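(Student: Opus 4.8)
The plan is to tie $w(XY)$ to the numerical radius of the off-diagonal block operator $T=\begin{pmatrix} 0 & X \\ Y & 0 \end{pmatrix}$ and then read off the bound from Theorem \ref{theorem:1}. The starting point is the elementary identity
\[ T^2=\begin{pmatrix} 0 & X \\ Y & 0 \end{pmatrix}^2=\begin{pmatrix} XY & 0 \\ 0 & YX \end{pmatrix}, \]
which turns the product $XY$ into a diagonal block of $T^2$. Applying Lemma \ref{theorem:lemma2}$(i)$ to this block-diagonal operator gives $w(T^2)=\max\{w(XY),w(YX)\}$, and in particular $w(XY)\le w(T^2)$.

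Next I would invoke the classical power inequality for the numerical radius, namely $w(A^2)\le w^2(A)$, with $A=T$. This yields $w(T^2)\le w^2(T)$, and chaining it with the previous step produces the clean intermediate estimate
\[ w(XY)\le w^2\begin{pmatrix} 0 & X \\ Y & 0 \end{pmatrix}. \]
Squaring both sides gives $w^2(XY)\le w^4(T)$, which is precisely the quantity bounded by Theorem \ref{theorem:1}.

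Finally I would substitute the fourth-power bound from Theorem \ref{theorem:1} to get
\[ w^2(XY)\le \frac{1}{16}\|S\|^2+\frac{1}{4}w^2(YX)+\frac{1}{8}w(YXS+SYX). \]
The right-hand side equals $\frac{1}{16}\bigl(\|S\|^2+4w^2(YX)+2w(YXS+SYX)\bigr)$, so taking square roots delivers exactly the asserted inequality
\[ w(XY)\le \frac{1}{4}\sqrt{\|S\|^2+4w^2(YX)+2w(YXS+SYX)}. \]

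The only non-routine ingredient is the power inequality $w(A^2)\le w^2(A)$; the remainder is algebraic bookkeeping together with two direct appeals to the cited results. The point that requires care is the order of the two reductions: one must first pass to $w(XY)\le w(T^2)$, since it is Lemma \ref{theorem:lemma2}$(i)$ that discards the $YX$ block and isolates $w(XY)$, and only then apply the power inequality to reach $w^2(T)$. Reversing this order would not produce the clean factor $\frac{1}{4}$ in the final bound.
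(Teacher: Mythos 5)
Your proof is correct and follows essentially the same route as the paper: both pass from $w(XY)$ to $\max\{w(XY),w(YX)\}=w(T^2)$ via Lemma \ref{theorem:lemma2}$(i)$ and the identity $T^2=\left(\begin{smallmatrix} XY & 0 \\ 0 & YX \end{smallmatrix}\right)$, then apply the power inequality $w(T^2)\leq w^2(T)$ and conclude with Theorem \ref{theorem:1}. The only difference is cosmetic bookkeeping (you square and take roots at the end, while the paper takes the square root of Theorem \ref{theorem:1}'s bound directly).
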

\begin{proof} Now,
\begin{eqnarray*}
w(XY) &\leq& \max \{w(XY), w(YX)\} \\
      &=& w\left(\begin{array}{cc}
    XY&0 \\
    0&YX
	\end{array}\right)\\
	&=& w\left(\begin{array}{cc}
    0&X \\
    Y&0
	\end{array}\right)^2\\
	&\leq& w^2\left(\begin{array}{cc}
    0&X \\
    Y&0
	\end{array}\right)\\
	&\leq& \frac{1}{4} \sqrt{\|S\|^2+4w^2(YX)+2w(YXS+SYX)}.
\end{eqnarray*}
\end{proof}

\begin{remark}
Using  Lemma \ref{theorem:lemma3}, it is easy to observe that the bound obtained  in Theorem \ref{theorem:1} is better than the second inequality  in \cite[Th. 3]{AF}.   
\end{remark}

\begin{remark} Here we note that when $H_1=H_2$ and $Y=X$ then it follows from Theorem \ref{theorem:1} and Lemma \ref{theorem:lemma2} $(iv)$ that $w^4(X) \leq \frac{1}{16}\|R\|^2+\frac{1}{4}w^2(X^2)+\frac{1}{8}w(X^2R+RX^2),$ where $R=|X|^2+|X^{*}|^2$. This inequality can be found in \cite[Th. 2.1]{BBP} 
\end{remark}

Next we prove a lower bound for the numerical radius of $2\times 2$ operator matrices.
\begin{theorem} \label{theorem:2}
Let $X \in B(H_2, H_1), Y \in B(H_1, H_2)$.  Then 

\[w^4\left(\begin{array}{cc}
    0&X \\
    Y&0
	\end{array}\right)\geq \frac{1}{16}\|S\|^2+\frac{1}{4}c^2(YX)+\frac{1}{8}m(YXS+SYX),\]   where $S=|X|^2+|Y^{*}|^2, ~c(YX)=\inf_{\theta \in \mathbb{R}}\inf_{ x\in H_2,\|x\|=1}\|\textit{Re}(e^{i\theta}YX)x\|$.  
\end{theorem}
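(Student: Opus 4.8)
The plan is to re-run the operator computation from the proof of Theorem~\ref{theorem:1}, but to estimate the resulting operator norm from \emph{below} instead of from above. As in that proof, set $f(\theta)=\frac{1}{2}\|e^{i\theta}X+e^{-i\theta}Y^{*}\|$; Lemma~\ref{theorem:lemma2}$(iii)$ and Lemma~\ref{theorem:lemma1} give $w\left(\begin{array}{cc}0&X\\Y&0\end{array}\right)=\sup_{\theta\in\mathbb{R}}f(\theta)$, and since $t\mapsto t^{4}$ is increasing on $[0,\infty)$ we have $w^{4}\left(\begin{array}{cc}0&X\\Y&0\end{array}\right)=\sup_{\theta}f^{4}(\theta)$. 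The identity established in Theorem~\ref{theorem:1} shows that $16\,f^{4}(\theta)=\|G(\theta)\|$, where $G(\theta)=\big(S+2\,\textit{Re}(e^{2i\theta}YX)\big)^{2}$ is a positive operator. Thus everything reduces to bounding $\sup_{\theta}\|G(\theta)\|$ from below.

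Because $G(\theta)\ge 0$, we may use $\|G(\theta)\|=\sup_{\|x\|=1}\langle G(\theta)x,x\rangle$. Expanding the square exactly as in Theorem~\ref{theorem:1} and pairing against a unit vector $x$ gives
\[\langle G(\theta)x,x\rangle=\|Sx\|^{2}+4\|\textit{Re}(e^{2i\theta}YX)x\|^{2}+2\,\textit{Re}\big(e^{2i\theta}\langle(YXS+SYX)x,x\rangle\big).\]
The step I would make is to fix the vector $x$ first and optimize only the phase. For each such $x$, choose $\theta_{0}$ so that $e^{2i\theta_{0}}\langle(YXS+SYX)x,x\rangle=|\langle(YXS+SYX)x,x\rangle|$, which turns the last summand into $2\,|\langle(YXS+SYX)x,x\rangle|\ge 2\,m(YXS+SYX)$. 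Since the infimum defining $c(YX)$ runs over all phases and all unit vectors (and $2\theta_{0}$ is just some real number), the middle summand satisfies $4\|\textit{Re}(e^{2i\theta_{0}}YX)x\|^{2}\ge 4\,c^{2}(YX)$. Consequently $\sup_{\theta}\langle G(\theta)x,x\rangle\ge \|Sx\|^{2}+4\,c^{2}(YX)+2\,m(YXS+SYX)$ for every unit vector $x$.

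Finally I would take the supremum over $x$. As $4\,c^{2}(YX)+2\,m(YXS+SYX)$ is independent of $x$ and $\sup_{\|x\|=1}\|Sx\|^{2}=\|S\|^{2}$, and since the suprema over $\theta$ and over $x$ commute, this yields $\sup_{\theta}\|G(\theta)\|=\sup_{\theta}\sup_{x}\langle G(\theta)x,x\rangle\ge \|S\|^{2}+4\,c^{2}(YX)+2\,m(YXS+SYX)$. Dividing by $16$ delivers the asserted inequality.

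The one delicate point is the order of optimization: the supremum over $\theta$ and $x$ of a three-term sum is not the sum of the three separate extrema, so the terms cannot be maximized independently. The argument survives because the phase $\theta_{0}$ can be adapted to each fixed $x$ to saturate the cross term, while the lower bounds $c(YX)$ and $m(YXS+SYX)$ for the remaining terms hold uniformly in both $\theta$ and $x$. This uniformity is precisely what permits the final $x$-supremum to promote $\|Sx\|^{2}$ to $\|S\|^{2}$ without weakening the two constant terms.
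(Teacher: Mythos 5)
Your proposal is correct and follows essentially the same route as the paper: both rest on the C*-identity $\|e^{i\theta}X+e^{-i\theta}Y^{*}\|^{2}=\|S+2\,\textit{Re}(e^{2i\theta}YX)\|$, the expansion of the square paired against a unit vector $x$, the choice of a phase $\theta_0$ \emph{depending on} $x$ to turn the cross term into $|\langle(YXS+SYX)x,x\rangle|$, the uniform lower bounds $c(YX)$ and $m(YXS+SYX)$, and a final supremum over $x$ to recover $\|S\|^{2}$. Your reformulation via $\sup_{\theta}\|G(\theta)\|$ and the interchange of the two suprema is just a cleaner bookkeeping of exactly the argument the paper gives.
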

\begin{proof}
Let $x\in H_2$ with $\|x\|=1$ and  $\theta$ be a real number such that $e^{2i\theta}\langle(YXS+SYX)x,x\rangle=|\langle(YXS+SYX)x,x\rangle|$. Then from  Lemma \ref{theorem:lemma2} $(iii)$  we get,
\begin{eqnarray*}
w\left(\begin{array}{cc}
    0&X \\
    Y&0
\end{array}\right)&\geq& \frac{1}{2}\|e^{i\theta}X+e^{-i\theta}Y^{*}\| \\
                  &\geq&\frac{1}{2}\|(e^{i\theta}X+e^{-i\theta}Y^{*})^{*}(e^{i\theta}X+e^{-i\theta}Y^{*})\|^{\frac{1}{2}}\\
                  &\geq&\frac{1}{2}\|(e^{-i\theta}X^{*}+e^{i\theta}Y)(e^{i\theta}X+e^{-i\theta}Y^{*})\|^{\frac{1}{2}}\\
				          &\geq& \frac{1}{2}\|S+ 2\textit{Re}(e^{2i\theta}YX)\|^{\frac{1}{2}}\\
				          &\geq& \frac{1}{2}\|(S+ 2\textit{Re}(e^{2i\theta}YX))^2\|^{\frac{1}{4}}\\
				          &\geq& \frac{1}{2}\|S^2+ 4(\textit{Re}(e^{2i\theta}YX))^2+2\textit{Re}(e^{2i\theta}(YXS+SYX))\|^{\frac{1}{4}}\\
									&\geq& \frac{1}{2}|\langle\big(S^2+ 4(\textit{Re}(e^{2i\theta}YX))^2+2\textit{Re}(e^{2i\theta}(YXS+SYX))\big)x,x\rangle|^{\frac{1}{4}}\\
									&\geq& \frac{1}{2}|\langle S^2x,x\rangle + 4\langle(\textit{Re}(e^{2i\theta}YX))^2x,x\rangle+2\textit{Re}\langle(e^{2i\theta}(YXS+SYX))x,x\rangle|^{\frac{1}{4}}\\
									&=& \frac{1}{2}\big[\| Sx\|^2 + 4\|\textit{Re}(e^{2i\theta}YX)x\|^2+2|\langle(YXS+SYX)x,x\rangle\big]^{\frac{1}{4}}\\
									&\geq& \frac{1}{2}\big[\| Sx\|^2 + 4c^2(YX)+2m(YXS+SYX)\big]^{\frac{1}{4}}.
		\end{eqnarray*}
Now taking supremum over $x\in H_2$ with $\|x\|=1$ in the above inequality we get,
\[w^4\left(\begin{array}{cc}
    0&X \\
    Y&0
	\end{array}\right)\geq \frac{1}{16}\|S\|^2+\frac{1}{4}c^2(YX)+\frac{1}{8}m(YXS+SYX).\] This completes the proof.
\end{proof} 
Now using Lemma \ref{theorem:lemma2}$(ii)$ and Theorem \ref{theorem:2} we get the following inequality.

\begin{cor}\label{theorem:cor3} 
Let $X \in B(H_2, H_1), Y \in B(H_1, H_2)$.  Then 
\[w^4\left(\begin{array}{cc}
    0&X \\
    Y&0
	\end{array}\right)\geq \frac{1}{16}\|P\|^2+\frac{1}{4}c^2(XY)+\frac{1}{8}m(XYP+PXY),\] where $P=|X^{*}|^2+|Y|^2$.
\end{cor}

\begin{remark} Here we note that when $H_1=H_2$ and $Y=X$ then it follows from Theorem \ref{theorem:2} and Lemma \ref{theorem:lemma2} $(iv)$ that $w^4(X) \geq \frac{1}{16}\|R\|^2+\frac{1}{4}c^2(X^2)+\frac{1}{8}m(X^2R+RX^2),$ where $R=|X|^2+|X^{*}|^2$. Also, this inequality can be found in \cite[Th. 3.1]{BBP} 
\end{remark}

Next we state the following lemma which can be found in \cite{OFK}.
\begin {lemma}\label{lemma:lemma5}
Let $X, Y, Z, W \in B(H)$. Then

\[w\left(\begin{array}{cc}
    X&Y \\
    Z&W
	\end{array}\right)\geq w\left(\begin{array}{cc}
    0&Y \\
    Z&0
	\end{array}\right)\] and
	\[w\left(\begin{array}{cc}
    X&Y \\
    Z&W
	\end{array}\right)\geq w\left(\begin{array}{cc}
    X&0 \\
    0&W
	\end{array}\right).\]
\end{lemma}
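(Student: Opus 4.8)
The plan is to exploit two elementary features of the numerical radius that are already recorded in the introduction: that $w(\cdot)$ is a norm on $B(H)$ (hence subadditive and satisfying $w(-A)=w(A)$), and that it is invariant under unitary conjugation. The single device that unlocks both inequalities is conjugation by the self-adjoint unitary $U=\left(\begin{array}{cc}I&0\\0&-I\end{array}\right)$ on $H\oplus H$, which changes the sign of the two off-diagonal blocks while fixing the diagonal blocks.

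First I would compute
\[U^{*}\left(\begin{array}{cc}X&Y\\Z&W\end{array}\right)U=\left(\begin{array}{cc}X&-Y\\-Z&W\end{array}\right),\]
so that unitary invariance of $w$ gives the key equality
\[w\left(\begin{array}{cc}X&Y\\Z&W\end{array}\right)=w\left(\begin{array}{cc}X&-Y\\-Z&W\end{array}\right).\]
Next, since $w$ is a norm, I would isolate the diagonal part by averaging. Writing $\left(\begin{array}{cc}X&0\\0&W\end{array}\right)=\frac{1}{2}\left(\begin{array}{cc}X&Y\\Z&W\end{array}\right)+\frac{1}{2}\left(\begin{array}{cc}X&-Y\\-Z&W\end{array}\right)$ and applying the triangle inequality together with the equality above yields
\[w\left(\begin{array}{cc}X&0\\0&W\end{array}\right)\leq\frac{1}{2}w\left(\begin{array}{cc}X&Y\\Z&W\end{array}\right)+\frac{1}{2}w\left(\begin{array}{cc}X&-Y\\-Z&W\end{array}\right)=w\left(\begin{array}{cc}X&Y\\Z&W\end{array}\right),\]
which is the second inequality. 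For the first, I would instead take the difference $\left(\begin{array}{cc}0&Y\\Z&0\end{array}\right)=\frac{1}{2}\left(\begin{array}{cc}X&Y\\Z&W\end{array}\right)-\frac{1}{2}\left(\begin{array}{cc}X&-Y\\-Z&W\end{array}\right)$, and the same triangle inequality (now using $w(-A)=w(A)$ on the second summand) produces the off-diagonal bound.

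There is no serious obstacle here; the argument is routine once the sign-change unitary $U$ is chosen. The only points requiring care are verifying that conjugation by $U$ flips exactly the two off-diagonal entries while fixing the diagonal, and observing that the two averaged matrices share the same numerical radius, so that the coefficients $\tfrac12+\tfrac12$ collapse to $1$ and no spurious constant survives. The essential insight is simply the selection of $U=\mathrm{diag}(I,-I)$, after which both bounds drop out of the subadditivity of the norm $w$.
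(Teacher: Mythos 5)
Your proof is correct, but there is nothing in the paper to compare it against: the paper does not prove this lemma, it simply quotes it from \cite{OFK}. Your argument --- conjugation by the self-adjoint unitary $U=\mathrm{diag}(I,-I)$, which fixes the diagonal blocks and flips the sign of the off-diagonal blocks, followed by writing the diagonal part as the average $\frac12 T+\frac12 U^*TU$ and the off-diagonal part as the difference $\frac12 T-\frac12 U^*TU$ and invoking subadditivity of the norm $w$ --- is exactly the standard pinching proof, and is essentially how the cited source establishes the result. One small correction to your framing: unitary invariance of $w$ is \emph{not} recorded in the paper's introduction (only that $w$ is a norm equivalent to the operator norm), so you should supply it; it is elementary, since for unitary $U$ one has $\langle U^*TUx,x\rangle=\langle T(Ux),Ux\rangle$ and $U$ maps the unit sphere onto itself, whence $W(U^*TU)=W(T)$ and $w(U^*TU)=w(T)$. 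With that one-line justification added, both inequalities follow precisely as you describe, with no spurious constants.
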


Now we are ready to prove an upper bound and a lower bound for the numerical radius of an operator matrix 
$\left(\begin{array}{cc}
    X&Y \\
    Z&W
	\end{array}\right)$, where $X, Y, Z, W \in B(H)$.

\begin{theorem}\label{theorem:3}
Let $X, Y, Z, W \in B(H)$. Then
\[w\left(\begin{array}{cc}
    X&Y \\
    Z&W
	\end{array}\right)\leq \max\{ w(X), w(W)\} + [\frac{1}{16}\|S\|^2+\frac{1}{4}w^2(ZY)+\frac{1}{8}w(ZYS+SZY)]^{\frac{1}{4}}\]
	and 
\[w\left(\begin{array}{cc}
    X&Y \\
    Z&W
	\end{array}\right)\geq \max\{ w(X), w(W), [\frac{1}{16}\|S\|^2+\frac{1}{4}c^2(ZY)+\frac{1}{8}m(ZYS+SZY)]^{\frac{1}{4}}\},\]
where $S=|Y|^2+|Z^{*}|^2, c(ZY)=\inf_{\theta \in \mathbb{R}}\inf_{ x\in H,\|x\|=1}\|\textit{Re}(e^{i\theta}ZY)x\|$.  
\end{theorem}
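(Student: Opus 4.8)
The plan is to separate the operator matrix into its diagonal and off-diagonal parts and to treat each piece by results already available. Writing
\[
\left(\begin{array}{cc} X & Y \\ Z & W \end{array}\right)
=\left(\begin{array}{cc} X & 0 \\ 0 & W \end{array}\right)
+\left(\begin{array}{cc} 0 & Y \\ Z & 0 \end{array}\right),
\]
I would use the fact recalled in the introduction that $w(\cdot)$ is a norm on $B(H\oplus H)$; hence the triangle inequality gives
\[
w\left(\begin{array}{cc} X & Y \\ Z & W \end{array}\right)
\leq w\left(\begin{array}{cc} X & 0 \\ 0 & W \end{array}\right)
+ w\left(\begin{array}{cc} 0 & Y \\ Z & 0 \end{array}\right).
\]

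Next I would evaluate the two summands. By Lemma~\ref{theorem:lemma2}$(i)$ the first equals $\max\{w(X),w(W)\}$. For the second I would apply Theorem~\ref{theorem:1} with the two off-diagonal entries taken to be $Y$ and $Z$, that is, making the substitution $X\mapsto Y$ and $Y\mapsto Z$ in the statement of that theorem. This substitution produces precisely $S=|Y|^2+|Z^{*}|^2$ on the right-hand side, so that
\[
w\left(\begin{array}{cc} 0 & Y \\ Z & 0 \end{array}\right)
\leq \left[\frac{1}{16}\|S\|^2+\frac{1}{4}w^2(ZY)+\frac{1}{8}w(ZYS+SZY)\right]^{\frac{1}{4}}.
\]
Adding the two estimates yields the asserted upper bound.

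For the lower bound I would appeal to Lemma~\ref{lemma:lemma5}. Its second inequality, together with Lemma~\ref{theorem:lemma2}$(i)$, gives
\[
w\left(\begin{array}{cc} X & Y \\ Z & W \end{array}\right)
\geq w\left(\begin{array}{cc} X & 0 \\ 0 & W \end{array}\right)
=\max\{w(X),w(W)\},
\]
while its first inequality gives $w\left(\begin{array}{cc} X & Y \\ Z & W \end{array}\right)\geq w\left(\begin{array}{cc} 0 & Y \\ Z & 0 \end{array}\right)$. Applying Theorem~\ref{theorem:2} under the same substitution $X\mapsto Y$, $Y\mapsto Z$ bounds this last quantity below by $\left[\frac{1}{16}\|S\|^2+\frac{1}{4}c^2(ZY)+\frac{1}{8}m(ZYS+SZY)\right]^{\frac{1}{4}}$. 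Since the numerical radius of the full matrix dominates each of these three quantities, it dominates their maximum, which is the claimed lower bound.

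I do not expect a genuine obstacle here: the upper bound is assembled from the triangle inequality and the off-diagonal estimate of Theorem~\ref{theorem:1}, while the lower bound follows from the monotonicity in Lemma~\ref{lemma:lemma5} and the off-diagonal estimate of Theorem~\ref{theorem:2}, with Lemma~\ref{theorem:lemma2}$(i)$ supplying the diagonal value in both cases. The only point demanding care is the bookkeeping of the substitution in Theorems~\ref{theorem:1} and~\ref{theorem:2}, so that $S$ is correctly read off as $|Y|^2+|Z^{*}|^2$ and the products $ZY$ and $ZYS+SZY$ appear in the right order.
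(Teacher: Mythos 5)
Your proposal is correct and is precisely the argument the paper intends: the paper's proof of this theorem is a one-line remark that it ``follows easily from Theorem \ref{theorem:1}, Theorem \ref{theorem:2} and Lemma \ref{lemma:lemma5}'', and your write-up supplies exactly those details --- the diagonal/off-diagonal splitting with the triangle inequality and Lemma \ref{theorem:lemma2}$(i)$ for the upper bound, and Lemma \ref{lemma:lemma5} with Lemma \ref{theorem:lemma2}$(i)$ and Theorem \ref{theorem:2} for the lower bound. The substitution bookkeeping ($S=|Y|^2+|Z^{*}|^2$, products in the order $ZY$) is also handled correctly.
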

\begin{proof}
The proof  follows easily from the Theorem \ref{theorem:1}, Theorem \ref{theorem:2} and Lemma \ref{lemma:lemma5}.
\end{proof}

\section{Application}
Let us consider a monic polynomial $p(z)=z^n+a_{n-1}z^{n-1}+\ldots+a_1z+a_0$ with complex coefficients $a_0, a_1, \ldots, a_{n-1}$. Then the Frobenius companion matrix of $p(z)$ is given by  
\begin{eqnarray*}
  C(p)&=&\left(\begin{array}{ccccccc}
    -a_{n-1}&-a_{n-2}&.&.&.&-a_1&-a_0 \\
    1&0&.&.&.&0&0\\
		0&1&.&.&.&0&0\\
    .& & & & & & \\
    .& & & & & &\\
    .& & & & & &\\
    0&0&.&.&.&1&0
    \end{array}\right).
		\end{eqnarray*}
Then the zeros of the polynomial $p(z)$ are  exactly the eigenvalues of $C(p)$. Also we know that the spectrum $\sigma(C(p))\subseteq \overline {W(C(p))}$, so that if $z$ is a zero of the polynomial  $p(z)$ then $|z|\leq w(C(p))$. 

Many mathematicians have estimated  the zeros of the polynomial using this above argument, some of them are mentioned below. Let $\lambda$ be a zero of the polynomial  $p(z)$.\\
  (1) Cauchy \cite{CM}	proved that 
		   \[ |\lambda|\leq 1+\max \{ |a_0|, |a_1|, \ldots, |a_{n-1}|\}.\]
	(2) Carmichael and Mason \cite{CM} proved that 
		\[ |\lambda|\leq (1+|a_0|^2+|a_1|^2+\ldots+|a_{n-1}|^2)^{\frac{1}{2}}.\]
	(3) Montel \cite{CM}	proved that 
		   \[ |\lambda|\leq \max \{ 1, |a_0|+ |a_1|+ \ldots+ |a_{n-1}|\}.\]
	(4) Fujii and Kubo \cite{FK1} proved that 
			\[|\lambda|\leq \cos\frac{\pi}{n+1}+\frac{1}{2}\big[\big(\sum_{j=0}^{n-1}|a_j|^2\big)^{\frac{1}{2}}+|a_{n-1}|\big].\]
	(5) Alpin et. al. \cite{YML} proved that
	     \[ |\lambda| \leq  \max_{1\leq k \leq n}[(1+|a_{n-1}|)(1+|a_{n-2}|)\ldots(1+|a_{n-k}|)]^{\frac{1}{k}}.\]
	(6) Paul and Bag \cite{PB2} proved that 
	\[ |\lambda| \leq\frac{1}{2}\big[w(A)+\cos\frac{\pi}{n-1}+\sqrt{(w(A)-\cos\frac{\pi}{n-1})^2+(1+\sqrt{\sum_{k=3}^n|a_{n-k}|^2})^2}\big],\]where
$A=\left(\begin{array}{cc}
    -a_{n-1}&-a_{n-2} \\
    1&0
 \end{array}\right).$\\
	(7)   Abu-Omar and Kittaneh \cite{A} proved that 
	     \[|\lambda| \leq \frac{1}{2}\big[\frac{1}{2}(|a_{n-1}|+\alpha)+\cos\frac{\pi}{n+1}+\sqrt{(\frac{1}{2}(|a_{n-1}|+\alpha)-\cos\frac{\pi}{n+1})^2+4\alpha' }\big],\] where $\alpha=\sqrt{\sum_{j=0}^{n-1}|a_j|^2}$ and $\alpha'=\sqrt{\sum_{j=0}^{n-2}|a_j|^2}$.\\
	(8) M. Al-Dolat et. al. \cite{MKMF} proved that
       \[ \lambda \leq \max \{ w(A), \cos\frac{\pi}{n+1}\} +\frac{1}{2} \big(1+\sqrt{{\sum_{j=0}^{n-3}|a_j|^2}} \big), \] where
$A=\left(\begin{array}{cc}
    -a_{n-1}&-a_{n-2} \\
    1&0
 \end{array}\right).$\\
We first prove the following theorem.

\begin{theorem}\label{theorem:4}
Let $\lambda$ be any zero of $p(z)$. Then

\begin{eqnarray*} 
 |\lambda|&\leq& |\frac{a_{n-1}}{n}|+\cos\frac{\pi}{n}+\frac{1}{2} [(1+\alpha)^2+4\alpha+4\sqrt{\alpha}(1+\alpha)]^{\frac{1}{4}},  
\end{eqnarray*}
where 
\begin{eqnarray*}
\alpha_{r}&=&\sum^{n}_{k=r} {^k}C_r\big(-\frac{a_{n-1}}{n}\big)^{k-r} a_{k},  ~~ r=0,1,\ldots, n-2, a_n=1, {^0}C_0=1, \\
\alpha&=&\sum^{n-2}_{i=0}|\alpha_i|^2.
\end{eqnarray*}
\end{theorem}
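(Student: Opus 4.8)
The plan is to control $|\lambda|$ by the numerical radius of the companion matrix of a \emph{depressed} version of $p$, and then feed the resulting block matrix into Theorem \ref{theorem:3}.

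First I would perform the Tschirnhaus shift $z = w - \frac{a_{n-1}}{n}$. Writing $a_n=1$ and expanding $p\!\left(w - \frac{a_{n-1}}{n}\right) = \sum_{k=0}^{n} a_k \left(w - \frac{a_{n-1}}{n}\right)^{k}$ by the binomial theorem, the coefficient of $w^{r}$ is seen to be exactly $\alpha_r = \sum_{k=r}^{n} {}^{k}C_r \left(-\frac{a_{n-1}}{n}\right)^{k-r} a_k$. In particular $\alpha_n=1$, and a one-line check gives $\alpha_{n-1} = a_{n-1} + n\!\left(-\frac{a_{n-1}}{n}\right) = 0$, so the shift removes the $w^{n-1}$ term. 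Hence $q(w) := w^{n} + \alpha_{n-2}w^{n-2} + \cdots + \alpha_1 w + \alpha_0$ is monic of degree $n$, and its zeros are precisely $\mu = \lambda + \frac{a_{n-1}}{n}$ as $\lambda$ ranges over the zeros of $p$. Since $\mu \in \sigma(C(q)) \subseteq \overline{W(C(q))}$, we obtain $|\mu| \le w(C(q))$ and therefore $|\lambda| \le \left|\frac{a_{n-1}}{n}\right| + w(C(q))$.

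Next I would decompose the companion matrix $C(q)$, whose top-left entry equals $-\alpha_{n-1}=0$, as a $2\times 2$ operator matrix on $\mathbb{C}\oplus\mathbb{C}^{n-1}$ with blocks $X=0$, $Y=(-\alpha_{n-2},\ldots,-\alpha_0)$ (a $1\times(n-1)$ row), $Z$ the $(n-1)\times 1$ column with first entry $1$ and the rest $0$, and $W$ the $(n-1)\times(n-1)$ matrix carrying $1$'s on its first subdiagonal. By Lemma \ref{lemma:lemma4}, $w(W)=\cos\frac{\pi}{n}$, so $\max\{w(X),w(W)\}=\cos\frac{\pi}{n}$. Applying the upper bound of Theorem \ref{theorem:3} then yields
\[ w(C(q)) \le \cos\frac{\pi}{n} + \left[\tfrac{1}{16}\|S\|^{2} + \tfrac14 w^{2}(ZY) + \tfrac18 w(ZYS+SZY)\right]^{\frac14}, \]
where $S=|Y|^{2}+|Z^{*}|^{2}$.

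Finally I would estimate the three quantities in the bracket using only submultiplicativity and the triangle inequality, noting $\|Y\|=\big(\sum_{j=0}^{n-2}|\alpha_j|^{2}\big)^{1/2}=\sqrt{\alpha}$ and $\|Z\|=1$: namely $\|S\|\le \|Y\|^{2}+\|Z\|^{2}=1+\alpha$, $w(ZY)\le\|Z\|\,\|Y\|=\sqrt{\alpha}$, and $w(ZYS+SZY)\le 2\|ZY\|\,\|S\|\le 2\sqrt{\alpha}(1+\alpha)$. Substituting these and pulling $\tfrac{1}{16}$ out of the bracket gives precisely $\tfrac12\big[(1+\alpha)^{2}+4\alpha+4\sqrt{\alpha}(1+\alpha)\big]^{1/4}$, which together with the displayed inequality produces the claimed bound. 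I expect the only genuinely non-routine step to be the first one: recognizing that the stated coefficients $\alpha_r$ are exactly the Taylor coefficients of $p$ about $-\frac{a_{n-1}}{n}$, so that passing to $q$ simultaneously kills the corner entry (making the $(1,1)$ block zero) and leaves the $(2,2)$ block as the pure shift whose numerical radius is $\cos\frac{\pi}{n}$. Everything after that is the mechanical bookkeeping just described.
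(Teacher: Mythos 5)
Your proposal is correct and is essentially the paper's own proof: the same shift $z=\eta-\frac{a_{n-1}}{n}$ to the depressed polynomial $q$, the same $1\oplus(n-1)$ block decomposition of the companion matrix, Lemma \ref{lemma:lemma4} giving the $\cos\frac{\pi}{n}$ term, and the bound of Theorem \ref{theorem:1} applied to the off-diagonal part, followed by the crude estimates $\|Y\|=\sqrt{\alpha}$, $\|Z\|=1$, $\|S\|\le 1+\alpha$. The only (harmless) deviations are these: you cite Theorem \ref{theorem:3}, which is stated for four blocks acting on a single space $H$, whereas your blocks are rectangular, so strictly you are reusing the proof of Theorem \ref{theorem:3} (triangle inequality for $w$ plus Lemma \ref{theorem:lemma2}(i) plus Theorem \ref{theorem:1}), which is exactly what the paper assembles on the spot; and you bound $w(ZYS+SZY)\le 2\|ZY\|\,\|S\|$ using only $w(\cdot)\le\|\cdot\|$, the triangle inequality and submultiplicativity, where the paper invokes the Fong--Holbrook Lemma \ref{theorem:lemma3} -- your step is more elementary and yields the same constant $2\sqrt{\alpha}(1+\alpha)$, so nothing is lost.
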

\begin{proof}
Putting $z=\eta -\frac{a_{n-1}}{n}$ in the polynomial $p(z)$ we get,  a polynomial \\$q(\eta)=\eta^n+\alpha_{n-2}\eta^{n-2}+\alpha_{n-3}\eta^{n-3}+\ldots +\alpha_{1}\eta+\alpha_{0},$  \\ where $\alpha_{r}=\sum^{n}_{k=r} {^k}C_r\big(-\frac{a_{n-1}}{n}\big)^{k-r} a_{k},  ~~ r=0,1,\ldots, n-2$, $a_n=1$ and $^{0}C_0=1$.

Now the  Frobenius companion matrix of the polynomial $q(\eta)$ is $C(q)=\left(\begin{array}{cc}
    A&B \\
    C&D
	\end{array}\right)$ where  $A=(0)_{1,1}$,  $B=(-\alpha_{n-2} ~~ -\alpha_{n-3}~~ \ldots~~ -\alpha_{1} ~~ -\alpha_{0})_{1,n-1}$,  $C^t=(1 ~~0 ~~\ldots ~~0~~ 0)_{1,n-1} $,  $D=\left(\begin{array}{cccccc}
    0&0&.&.&.&0\\
		1&0&.&.&.&0\\
    .& & & & & \\
    .& & & & &\\
    .& & & & &\\
    0&0&.&.&1&0
   \end{array}\right)_{n-1,n-1}.$
	
	Now using Lemma \ref{theorem:lemma2}$(i)$ and Lemma \ref{lemma:lemma4} we get,
	\begin{eqnarray*} 
	w\left(\begin{array}{cc}
    A&B \\
    C&D
	\end{array}\right)&\leq&  w\left(\begin{array}{cc}
    A&0 \\
    0&D
	\end{array}\right)+w\left(\begin{array}{cc}
    0&B \\
    C&0
	\end{array}\right)\\
	&=& w(D)+w\left(\begin{array}{cc}
    0&B \\
    C&0
	\end{array}\right)\\
	&=& \cos\frac{\pi}{n}+w\left(\begin{array}{cc}
    0&B \\
    C&0
	\end{array}\right).
	\end{eqnarray*}
	Therefore, if $\eta$ is any zero of the polynomial $q(\eta)$ then $|\eta|\leq \cos\frac{\pi}{n}+w\left(\begin{array}{cc}
    0&B \\
    C&0
	\end{array}\right).$ 
	Therefore if $\lambda$ is any zero of the polynomial $p(z)$ then $|\lambda|\leq |\frac{a_{n-1}}{n}|+\cos\frac{\pi}{n}+w\left(\begin{array}{cc}
    0&B \\
    C&0
	\end{array}\right).$
	Now using the Theorem \ref{theorem:1} in the above inequality we get, 
	\begin{eqnarray*}
	 |\lambda|&\leq& |\frac{a_{n-1}}{n}|+\cos\frac{\pi}{n}+[\frac{1}{16}\|S\|^2+\frac{1}{4}w^2(CB)+\frac{1}{8}w(CBS+SCB)]^{\frac{1}{4}}, \\ &&\mbox {where}~~ S=B^{*}B+CC^{*} \\ 
	          &\leq& |\frac{a_{n-1}}{n}|+\cos\frac{\pi}{n}+\frac{1}{2}[\|S\|^2+ 4\|B\|^2+ 4 \|B\|\|S\|]^{\frac{1}{4}}, ~[~\mbox {using Lemma }\ref{theorem:lemma3}]\\ 
						&\leq& |\frac{a_{n-1}}{n}|+\cos\frac{\pi}{n}+\frac{1}{2} [(1+\alpha)^2+4\alpha+4\sqrt{\alpha}(1+\alpha)]^{\frac{1}{4}}
	\end{eqnarray*}
This completes the proof of the theorem.
\end{proof}

\begin{remark}
There is an error in the summation formula for  computation of the coefficient $b_r$ in \cite[page 237]{PB2} and $\alpha_r$ in \cite[Th. 3.1]{PB1}.
\end{remark}

  We illustrate with numerical examples to show that the above bound obtained by us in Theorem \ref{theorem:4} is better than the existing bounds. 
	\begin{example}
	Consider the polynomial $p(z)=z^5+2z^4+z+1$. Then the upper bounds of  the zeros of this polynomial $p(z)$ estimated by different mathematicians are as shown in the following table.
	\end{example}
	\begin{center}
\begin{tabular}{ |c|c| } 
 \hline
    Cauchy \cite{CM} &  3.000 \\ 
 \hline
    Montel \cite{CM} &  4.000 \\ 
 \hline
  Carmichael and Mason \cite{CM}& 2.645 \\
 \hline 
   Fujii and Kubo \cite{FK1} & 3.090  \\
 \hline
Alpin et. al. \cite{YML} & 3.000 \\
\hline
Paul and Bag \cite{PB2} & 2.810 \\
\hline
Abu-Omar and Kittaneh \cite{A} & 2.914 \\
\hline
    M. Al-Dolat et. al. \cite{MKMF} &  3.325 \\ 
\hline
 \end{tabular}
\end{center}
But our bound obtained in Theorem \ref{theorem:4} gives $|\lambda| \leq 2.625$ which is better than all the estimations mentioned above. 

\bibliographystyle{amsplain}

\end{document}